\newtheorem{propo}{{\bf Proposition}}[section]
\newtheorem{coro}[propo]{{\bf Corollary}}
\newtheorem{lemma}[propo]{{\bf Lemma}} \newtheorem{theor}[propo]{{\bf
Theorem}} \newtheorem{ex}{{\sc Example}}[section]
\newenvironment{proof}{{\bf Proof.}}{$\Box$}
\def\N{{\mathbb N}}
\def\Z{{\mathbb Z}}
\begin{document}

\vspace*{1.0in}

\begin{center} QUASI-IDEALS OF LEIBNIZ ALGEBRAS
\end{center}
\bigskip

\begin{center} DAVID A. TOWERS 
\end{center}
\bigskip

\begin{center} Department of Mathematics and Statistics

Lancaster University

Lancaster LA1 4YF

England

d.towers@lancaster.ac.uk 
\end{center}
\bigskip

\begin{abstract} A subspace $H$ of a Leibniz algebra $L$ is called a {\em quasi-ideal} if $[H,K]+[K,H]\subseteq H+K$ for every subspace $K$ of $L$. They include ideals and subalgebras of codimension one in $L$. Quasi-ideals of Lie algebras were classified in two remarkable papers of Amayo (\cite{amayo} and \cite{amayo2}). The objective here is to extend those results to the larger class of Leibniz algebras, and to classify those Leibniz algebras in which every subalgebra is a quasi-ideal. 
\par 
\noindent {\em Mathematics Subject Classification 2000}: 17B05, 17B20, 17B30, 17B50.
\par
\noindent {\em Key Words and Phrases}: Lie algebras, Leibniz algebras, quasi-ideal, subalgebras of codimension one, extraspecial Leibniz algebras, solvable, nilpotent, core. 
\end{abstract}

\section{Introduction}
\medskip

An algebra $L$ over a field $F$ is called a {\em Leibniz algebra} if, for every $x,y,z \in L$, we have
\[  [x,[y,z]]=[[x,y],z]-[[x,z],y]
\]
In other words the right multiplication operator $R_x : L \rightarrow L : y\mapsto [y,x]$ is a derivation of $L$. As a result such algebras are sometimes called {\it right} Leibniz algebras, and there is a corresponding notion of {\it left} Leibniz algebra. Every Lie algebra is a Leibniz algebra and every Leibniz algebra satisfying $[x,x]=0$ for every element is a Lie algebra. They were introduced in 1965 by Bloh (\cite{bloh}) who called them $D$-algebras, though they attracted more widespread interest, and acquired their current name, through work by Loday and Pirashvili ({\cite{loday1}, \cite{loday2}). They have natural connections to a variety of areas, including algebraic $K$-theory, classical algebraic topology, differential geometry, homological algebra, loop spaces, noncommutative geometry and physics. A number of structural results have been obtained as analogues of corresponding results in Lie algebras. One such is the structure of Leibniz algebras all of whose subalgebras are ideals given by Kurdachenko, Semko and Subbotin in \cite{kss}.
\par

Put $I=$ span$\{x^2:x\in L\}$. Then
\begin{align} [y,x^2]=&[[y,x],x]-[[y,x],x]=0 \hbox{ and } \nonumber \\
[x^2,y]=&[x,[x,y]]+[[x,y],x]=(x+[x,y])^2-x^2-[x,y]^2\in I, \nonumber
\end{align} so $I$ is an ideal of $L$, $[L,I]=0$ and $[x,y]+[y,x]\in I$ for all $x,y\in L$.
In fact, $I$ is the smallest ideal of $L$ such that $L/I$ is a Lie algebra; $L/I$ is sometimes called the {\em liesation} of $L$. 
\par

We define the following series:
\[ L^1=L,L^{k+1}=[L^k,L] \hbox{ and } L^{(1)}=L,L^{(k+1)}=[L^{(k)},L^{(k)}] \hbox{ for all } k=2,3, \ldots
\]
Then $L$ is {\em nilpotent} (resp. {\em solvable}) if $L^n=0$ (resp.$ L^{(n)}=0$) for some $n \in \N$. The {\em nilradical}, $N(L)$, (resp. {\em radical}, $R(L)$) is the largest nilpotent (resp. solvable) ideal of $L$.
\par

A subalgebra Q of a Lie algebra $L$ is said to be a quasi-ideal of $L$ if $[Q,Fx] \subseteq Q+Fx$ for every $x\in L$. Clearly, every ideal and every subalgebra of codimension one in $L$ is a quasi-ideal of $L$. Core-free quasi-ideals of a Lie algebra were completely determined over any field by Amayo (\cite{amayo}) and independently by Gein (\cite{gein}). Amayo also gave an explicit description of core-free subalgebras of codimension one in $L$ in \cite{amayo2}. Our objective here is to produce similar results for Leibniz algebras.
\par

In section 2 we consider some general results on quasi-ideals and subquasi-ideals. In section 3 we produce a classification of core-free quasi-ideals similar to those of Amayo. Finally in section 4 we classify Leibniz algebras in which every subalgebra is a quasi-ideal, thereby generalising the results of \cite{kss}.
\par

Throughout, $L$ will be a (not necessarily finite-dimensional) Leibniz algebra over a field $F$. If $H$ is a subspace of $L$ we denote by $\langle H\rangle$ the subalgebra generated by $H$. Algebra direct sums will be denoted by $\oplus$, whereas direct sums of the vector space structure alone will be denoted by $\dot{+}$.

\section{Quasi-ideals}
Let $H,K$ be subspaces of a Leibniz algebra. We say that $H$ {\em permutes with} $K$ if $[H,K]+[K,H]\subseteq H+K$. A subspace which permutes with every subspace of a Leibniz algebra $L$ is called a {\em quasi-ideal} of $L$; such a subspace is necessarily a subalgebra of $L$. In this section we will establish analogues of some results of Amayo for Lie algebras in \cite[Section 3]{amayo}.
\par

The following lemma will prove useful.

\begin{lemma}\label{l:lemma 0} Let $H$ be a quasi-ideal of $L$, let $h\in H$ and $x\in L$. If $[x,h]\in H$ then $[h,x] \in H$.
\end{lemma}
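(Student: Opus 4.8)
The plan is to apply the quasi-ideal condition to the single one-dimensional subspace $K=Fx$, and then to use the hypothesis $[x,h]\in H$ together with the liesation ideal $I$ to eliminate the stray scalar that this produces.

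First I would invoke the defining property $[H,K]+[K,H]\subseteq H+K$ with $K=Fx$, which gives $[h,x]\in H+Fx$; thus I may write $[h,x]=h_{0}+\alpha x$ for some $h_{0}\in H$ and $\alpha\in F$. If $\alpha=0$ the conclusion is immediate, so the whole problem reduces to showing that the case $\alpha\neq0$ cannot evade the conclusion.

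To control $\alpha$ I would bring in the ideal $I=\mathrm{span}\{y^{2}:y\in L\}$ and the two facts recorded in the introduction: $[x,h]+[h,x]\in I$, and $[L,I]=0$. The first, combined with $[x,h]\in H$ and the expression $[h,x]=h_{0}+\alpha x$, rearranges to $\alpha x\in H+I$. Now assume $\alpha\neq0$; dividing by $\alpha$ gives $x\in H+I$, say $x=h_{1}+i$ with $h_{1}\in H$ and $i\in I$. Then $[h,x]=[h,h_{1}]+[h,i]$, where $[h,h_{1}]\in H$ because $H$ is a subalgebra and $[h,i]=0$ because $[L,I]=0$; hence $[h,x]\in H$ in this case as well.

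The step carrying the real content is the deduction $\alpha x\in H+I$: this is what turns the one-sided hypothesis $[x,h]\in H$ into usable information about the opposite product $[h,x]$, and it is precisely where the genuinely non-Lie feature of $L$---the ideal $I$ with $[L,I]=0$---enters. Once that containment is in hand, the annihilation property $[L,I]=0$ makes the remaining case essentially automatic, so I do not anticipate any further obstacle.
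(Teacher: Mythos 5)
Your proof is correct and follows essentially the same route as the paper: apply the quasi-ideal condition to $K=Fx$, use $[x,h]+[h,x]\in I$ to deduce $\lambda x\in H+I$, and split into the cases $\lambda=0$ and $x\in H+I$. The only difference is that you spell out the second case explicitly via $[L,I]=0$, a step the paper leaves implicit.
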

\begin{proof} Suppose that $[x,h]\in H$. Since $H$ is a quasi-ideal we have that $[h,x]=\lambda x+h_1$ for some $h_1\in H$, $\lambda \in F$. But $[x,h]+[h,x]\in I$, so $[h,x]\in H+I$. It follows that $\lambda x\in H+I$ and hence that $\lambda=0$ or $x\in H+I$. In either case, $[h,x]\in H$.
\end{proof}

\begin{lemma}\label{l:lemma 1} If $H$ is a quasi-ideal of $L$ then $[L,H^2]+[H^2,L]\subseteq H$.
\end{lemma}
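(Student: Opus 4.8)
The plan is to prove the two containments $[L,H^2]\subseteq H$ and $[H^2,L]\subseteq H$ separately, obtaining the second as an easy consequence of the first via Lemma~\ref{l:lemma 0}. Throughout I would recall that a quasi-ideal is a subalgebra, so $H^2=[H,H]\subseteq H$; thus it suffices to show, for each fixed $x\in L$ and all $h_1,h_2\in H$, that both $[x,[h_1,h_2]]$ and $[[h_1,h_2],x]$ lie in $H$.

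For the first containment I would exploit the quasi-ideal condition with the one-dimensional subspace $K=Fx$. This gives $[x,h]\in H+Fx$ for every $h\in H$, so we may write $[x,h_i]=\mu_i x+h_i'$ with $\mu_i\in F$ and $h_i'\in H$ for $i=1,2$. Now expand using the defining Leibniz identity,
\[ [x,[h_1,h_2]]=[[x,h_1],h_2]-[[x,h_2],h_1], \]
and substitute the expressions for $[x,h_1]$ and $[x,h_2]$. Expanding each term bilinearly, the contributions that contain $x$ reassemble into $\mu_1[x,h_2]-\mu_2[x,h_1]$, and substituting once more shows that the genuinely $x$-valued parts (the $\mu_1\mu_2 x$ terms) cancel. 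What remains is a combination of terms of the form $\mu_j h_i'$ and $[h_i',h_j]$, all of which lie in $H$ because each $h_i'\in H$ and $[H,H]\subseteq H$. Hence $[x,[h_1,h_2]]\in H$, giving $[L,H^2]\subseteq H$.

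For the second containment there is no need to repeat the computation. Since $[h_1,h_2]\in H^2\subseteq H$ and we have just shown $[x,[h_1,h_2]]\in H$, Lemma~\ref{l:lemma 0} applied with $h=[h_1,h_2]$ yields $[[h_1,h_2],x]\in H$ directly. Taking spans over all $x,h_1,h_2$ then gives $[H^2,L]\subseteq H$, and combining the two halves completes the proof.

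I expect the main obstacle to be the bookkeeping in the central computation: one must track the $x$-terms carefully through the asymmetric Leibniz identity to be sure they cancel, and confirm that every surviving summand is forced into $H$ rather than merely into $H+Fx$. The clean point, which avoids a second messy calculation, is recognising that Lemma~\ref{l:lemma 0} converts the statement about $[L,H^2]$ into the companion statement about $[H^2,L]$ for free.
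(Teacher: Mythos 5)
Your proposal is correct and follows essentially the same route as the paper: write $[x,h_i]=\mu_i x+h_i'$ using the quasi-ideal condition, expand $[x,[h_1,h_2]]$ via the Leibniz identity so the $\mu_1\mu_2 x$ terms cancel, and then deduce $[[h_1,h_2],x]\in H$ from Lemma~\ref{l:lemma 0}. The paper's proof is exactly this computation.
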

\begin{proof} Let $a,b\in H$, $x\in L$. Then
\[ [x,a]=\alpha x+a_1, [x,b]=\beta x+b_1,
\] for some $a_1, a_2, b_1, b_2 \in H$. Hence
\begin{align}
[x,[a,b]] & = [[x,a],b]-[[x,b],a] \nonumber \\
 & = [\alpha x+a_1,b]-[\beta x+b_1,a] \nonumber \\
 & = \alpha(\beta x+b_1)+[a_1,b]-\beta(\alpha x+a_1)-[b_1,a] \nonumber \\
 & = \alpha b_1-\beta a_1+[a_1,b]-[b_1,a]\in H. \nonumber
\end{align}
But now $[[a,b],x]\in H$ also, by Lemma \ref{l:lemma 0}.
\end{proof}
\medskip

We say that $H$ is an {\em m-step subquasi-ideal} of $L$ if there is a chain of subalgebras of $L$,
\[ H=H_m \hspace{.2cm} qu  \hspace{.1cm} H_{m-1}  \hspace{.1cm} qu  \hspace{.1cm} \ldots \hspace{.1cm} qu \hspace{.1cm} H_0 =L, 
\] where each term in the chain is a quasi-ideal of the next. The next few lemmas are established by induction proofs. However, these are not entirely straightforward so the proofs are included.

\begin{lemma}\label{l:lemma 2} If $H$ is an $m$-step subquasi-ideal of $L$ then $$[L,(H^2)^m]+[(H^2)^m,L]\subseteq H,$$
\end{lemma}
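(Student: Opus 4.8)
The plan is to exploit the subquasi-ideal chain $H = H_m \, qu \, H_{m-1} \, qu \, \cdots \, qu \, H_0 = L$ together with Lemma~\ref{l:lemma 1} applied at each individual link, rather than attempting a single downward induction along the chain (which, as I explain below, breaks down). Write $K = H^2$ for brevity, so that $(H^2)^j = K^j$ in the lower central series of $K$, and the target is $[L,K^m]+[K^m,L]\subseteq H$.

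First I would record a \emph{one-step descent}. For each $\ell$ with $0\le \ell\le m-1$ we have $H_{\ell+1}\, qu\, H_\ell$, so Lemma~\ref{l:lemma 1} (applied with $H_\ell$ in the role of $L$ and $H_{\ell+1}$ in the role of $H$) gives $[H_\ell,H_{\ell+1}^2]+[H_{\ell+1}^2,H_\ell]\subseteq H_{\ell+1}$. Since $H=H_m\subseteq H_{\ell+1}$ we have $K=H^2\subseteq H_{\ell+1}^2$, and hence
$$[H_\ell,K]\subseteq H_{\ell+1}\quad\text{and}\quad [K,H_\ell]\subseteq H_{\ell+1}.$$
Thus a single bracket against $K$ pushes an element one step further down the chain. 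The heart of the argument is then the two-parameter claim: for all $i\ge 0$ and $j\ge 1$ with $i+j\le m$,
$$[H_i,K^j]\subseteq H_{i+j}\quad\text{and}\quad [K^j,H_i]\subseteq H_{i+j},$$
which I would prove by induction on $j$ (with $i$ quantified over all admissible values at each stage). The case $j=1$ is exactly the one-step descent above. For the inductive step I write $K^j=[K^{j-1},K]$ and expand a typical bracket using the Leibniz identity: for $x\in H_i$, $a\in K^{j-1}$, $c\in K$,
$$[x,[a,c]]=[[x,a],c]-[[x,c],a],$$
and I bound the two summands separately. In the first, $[x,a]\in[H_i,K^{j-1}]\subseteq H_{i+j-1}$ by the inductive hypothesis, so $[[x,a],c]\in[H_{i+j-1},K]\subseteq H_{i+j}$ by one-step descent; in the second, $[x,c]\in[H_i,K]\subseteq H_{i+1}$ by one-step descent, so $[[x,c],a]\in[H_{i+1},K^{j-1}]\subseteq H_{i+j}$ by the inductive hypothesis at base level $i+1$. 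The right-hand containment is handled identically, using that $R_x$ is a derivation to write $[[a,c],x]=[[a,x],c]+[a,[c,x]]$ and applying the same two estimates. Taking $i=0$ and $j=m$ then yields $[L,K^m]+[K^m,L]\subseteq H_m=H$, as required.

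The point I expect to be the main obstacle is recognising that the natural single downward induction on the chain fails. If one only tracks the statement $[H_i,K^{m-i}]\subseteq H$, then expanding $K^{m-i}=[K^{m-i-1},K]$ produces a term of the shape $[H_{i+1},K]$ whose lower-central index has collapsed to $1$, and this cannot be reabsorbed into $H$ by the available hypothesis (it lands only in $H_{i+2}$). The fix is precisely to decouple the chain-depth from the lower-central depth and prove the sharper $[H_i,K^j]\subseteq H_{i+j}$, in which each consumed factor of $K$ is accounted against exactly one step of descent along the chain; this is what makes both summands of the Leibniz expansion close up at the same level.
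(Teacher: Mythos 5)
Your proof is correct. The core computation is identical to the paper's: expand $(H^2)^j=[(H^2)^{j-1},H^2]$ via the Leibniz identity (and the derivation property of right multiplication for the right-hand products), and absorb each factor of $H^2$ one step down the chain using Lemma \ref{l:lemma 1}. The difference is purely organizational: you prove the sharper two-parameter statement $[H_i,(H^2)^j]+[(H^2)^j,H_i]\subseteq H_{i+j}$ by induction on $j$, whereas the paper inducts on the chain length $m$ and applies its own statement to the two sub-chains $H_t\,qu\,\cdots\,qu\,L$ and $H\,qu\,\cdots\,qu\,H_1$, viewed inside the ambient algebras $L$ and $H_1$ respectively. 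These are equivalent devices: your claim for the pair $(i,j)$ is precisely the paper's lemma applied to the length-$j$ sub-chain from $H_i$ down to $H_{i+j}$, together with the containment $H^2\subseteq H_{i+j}^2$; conversely the lemma is your case $i=0$, $j=m$. Both arguments therefore decouple the lower-central depth from the position in the chain --- yours makes this explicit in the induction statement, the paper's achieves it by varying the ambient algebra. Your closing diagnosis of why the naive induction tracking only $[H_i,(H^2)^{m-i}]\subseteq H$ breaks down is accurate, but note that this is not the induction the paper actually runs, so it is an obstacle to a third approach rather than a defect you are repairing in the paper's.
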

\begin{proof} We use induction on $m$. The result is true for $m=1$ by Lemma \ref{l:lemma 1}. Suppose that it holds for $m=t$ and let $H$ be a $(t+1)$-step subquasi-ideal of $L$. Then there is a chain of subalgebras of $L$
\[ H=H_{t+1} \hspace{.2cm} qu  \hspace{.1cm} H_t  \hspace{.1cm} qu  \hspace{.1cm} \ldots \hspace{.1cm} qu \hspace{.1cm} H_0 =L, 
\] where each term in the chain is a quasi-ideal of the next. Clearly, $H_t$ is a $t$-step subquasi-ideal of $L$, $H$ is a $t$-step subquasi-ideal of $H_1$, $H_1$ is a quasi-ideal of $L$ and $H$ is a quasi-ideal of $H_t$, so
\[ [L,(H_t^2)^t]+[(H_t^2)^t,L]\subseteq H_t, \hspace{.5cm}  [H_1,(H^2)^t]+[(H^2)^t,H_1]\subseteq H 
\]
\[ [L,(H_1)^2]+[(H_1)^2,L]\subseteq H_1 \hbox{ and } [H_t,H^2]+[H^2,H_t]\subseteq H.
\]
Hence
\begin{align} & [L,(H^2)^{t+1}]+[(H^2)^{t+1},L] = [L,[(H^2)^t,H^2]]+[[(H^2)^t,H^2],L]  \nonumber \\
& \subseteq [[L,(H^2)^t],H^2]+[[L,H^2],(H^2)^t]+[(H^2)^t,[H^2,L]]+[[(H^2)^t,L],H^2] \nonumber \\
& \subseteq [H_t,H^2]+[H_1,(H^2)^t]+[(H^2)^t,H_1]+[H_t,H^2]\subseteq H \nonumber
\end{align} The result follows.
\end{proof}

\begin{lemma}\label{l:lemma 3} If $H$ is a quasi-ideal of $L$ then $$[L,(H^2)^n]+[(H^2)^n,L]\subseteq (H^2)^{n-1}$$ for all $n\geq 2$.
\end{lemma}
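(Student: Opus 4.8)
The plan is to prove the statement by induction on $n\ge 2$, handling the two sides $[L,(H^2)^n]$ and $[(H^2)^n,L]$ in parallel, in the spirit of Lemmas~\ref{l:lemma 1} and \ref{l:lemma 2}: expand a generator of $(H^2)^n$ by the Leibniz identity (respectively by the derivation property of $R_x$), bound the inner brackets, and read off membership in $(H^2)^{n-1}$.

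Before starting I would isolate two facts about the lower central series of a Leibniz algebra. First, for any Leibniz algebra $A$ one has $[A^i,A^j]\subseteq A^{i+j}$; this is a routine induction on $j$ using the Leibniz identity. Second, if $A$ is an ideal of a subalgebra $S$, then every term $A^k$ of its lower central series is again an ideal of $S$; this follows from the first fact by induction on $k$. Applying the second fact with $S=H$ and $A=H^2$ (an ideal of $H$, since $H^2\subseteq H$ forces $[H,H^2]+[H^2,H]\subseteq H^2$) yields the crucial containments $[H,(H^2)^{k}]+[(H^2)^{k},H]\subseteq(H^2)^{k}$ for every $k\ge 1$.

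For the induction, write a typical generator of $(H^2)^n=[(H^2)^{n-1},H^2]$ as $[w,v]$ with $w\in(H^2)^{n-1}$ and $v\in H^2$, and let $x\in L$. For the first side I would expand $[x,[w,v]]=[[x,w],v]-[[x,v],w]$. The element $[x,v]$ lies in $H$ by Lemma~\ref{l:lemma 1}, so $[[x,v],w]\in[H,(H^2)^{n-1}]\subseteq(H^2)^{n-1}$ by the ideal property. For $[x,w]$ there are two cases: when $n=2$ we have $w\in H^2$, so Lemma~\ref{l:lemma 1} gives $[x,w]\in H$ and hence $[[x,w],v]\in[H,H^2]=H^2=(H^2)^{n-1}$; when $n\ge 3$ the inductive hypothesis gives $[x,w]\in(H^2)^{n-2}$ and hence $[[x,w],v]\in[(H^2)^{n-2},H^2]=(H^2)^{n-1}$. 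Either way $[x,[w,v]]\in(H^2)^{n-1}$, so $[L,(H^2)^n]\subseteq(H^2)^{n-1}$. The second side is structurally identical: since $R_x$ is a derivation, $[[w,v],x]=[[w,x],v]+[w,[v,x]]$, and the same two bounds (now $[v,x]\in H$ so $[w,[v,x]]\in[(H^2)^{n-1},H]\subseteq(H^2)^{n-1}$) give $[(H^2)^n,L]\subseteq(H^2)^{n-1}$.

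The expansions via the Leibniz identity are harmless bookkeeping already rehearsed in the preceding lemmas; the one place that genuinely needs care---and the main obstacle---is the term $[[x,v],w]$ together with its mirror $[w,[v,x]]$. Here Lemma~\ref{l:lemma 1} only places $[x,v]$ in $H$, not in any deeper term of the series, so without the ideal property one cannot close the argument. Establishing that each $(H^2)^{k}$ is an ideal of $H$, and not merely of $H^2$, is therefore the technical heart of the proof.
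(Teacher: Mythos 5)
Your proof is correct and follows essentially the same route as the paper: induct on $n$, expand a generator of $(H^2)^n=[(H^2)^{n-1},H^2]$ via the Leibniz identity (resp.\ the derivation property of $R_x$), control the $H^2$ factor with Lemma~\ref{l:lemma 1}, and absorb the remaining terms using the fact that $(H^2)^{n-1}$ is an ideal of $H$. The only difference is cosmetic: you justify that ideal property explicitly (via $[A^i,A^j]\subseteq A^{i+j}$), whereas the paper simply asserts it.
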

\begin{proof} We use induction on $n$. If $n=2$,
\begin{align}
[L,(H^2)^2]+[(H^2)^2,L] & = [L,[H^2,H^2]]+[[H^2,H^2],L] \nonumber \\
 & \subseteq [[L,H^2],H^2]+[H^2,[H^2,L]]+[[H^2,L],H^2] \nonumber \\
 & \subseteq [H,H^2]+[H^2,H] \subseteq H^2 \nonumber
\end{align}
So suppose the result holds for $n=r$. Then
\begin{align}
& [L,(H^2)^{r+1}]+[(H^2)^{r+1},L] = [L,[(H^2)^r,H^2]]+[[(H^2)^r,H^2],L] \nonumber \\
 & \subseteq [[L,(H^2)^r],H^2]+[[L,H^2],(H^2)^r]+[(H^2)^r,[H^2,L]]+[[(H^2)^r,L],H^2] \nonumber \\
& \subseteq (H^2)^r + [H,(H^2)^r]+[(H^2)^r,H]\subseteq  (H^2)^r,  \nonumber
\end{align} since $(H^2)^r$ is an ideal of $H$.
\end{proof}

\begin{lemma}\label{l:lemma 4} Let $H$ be an $m$-step subquasi-ideal of $L$. Then
\[ [L,(H^2)^{m+n}]+[(H^2)^{m+n},L]\subseteq (H^2)^n
\]
for all $n\geq 1$.
\end{lemma}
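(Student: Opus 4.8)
The plan is to prove the statement by a \emph{double} induction: an outer induction on the number of steps $m$, and, within each $m$, an inner induction on $n$. The base of the outer induction is $m=1$, where a $1$-step subquasi-ideal is simply a quasi-ideal and the desired inclusion $[L,(H^2)^{1+n}]+[(H^2)^{1+n},L]\subseteq (H^2)^n$ is exactly Lemma \ref{l:lemma 3}. For the outer step I fix a chain of subalgebras $H=H_m,H_{m-1},\ldots,H_0=L$ in which each term is a quasi-ideal of the next, and I record two facts for repeated use. First, since $H\subseteq H_1$ and $H_1$ is a quasi-ideal of $L$, Lemma \ref{l:lemma 1} gives $[L,H^2]\subseteq[L,H_1^2]\subseteq H_1$ and, symmetrically, $[H^2,L]\subseteq H_1$. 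Second, $H$ is an $(m-1)$-step subquasi-ideal of $H_1$, so the case $m-1$ of the present lemma, applied inside $H_1$, yields $[H_1,(H^2)^{m-1+k}]+[(H^2)^{m-1+k},H_1]\subseteq (H^2)^k$ for every $k\geq 1$.

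With these in hand I run the inner induction on $n$. In every case the mechanism is to write $(H^2)^{m+n}=[(H^2)^{m+n-1},H^2]$ and expand the outer bracket with the two derivation identities already used in Lemmas \ref{l:lemma 2} and \ref{l:lemma 3}, namely $[L,[A,B]]\subseteq[[L,A],B]+[[L,B],A]$ and $[[A,B],L]\subseteq[[A,L],B]+[A,[B,L]]$. For the inner base $n=1$ the term $[[L,(H^2)^m],H^2]$ is controlled by Lemma \ref{l:lemma 2}, which gives $[L,(H^2)^m]\subseteq H$ and hence places this term in $[H,H^2]\subseteq H^2$; the term $[[L,H^2],(H^2)^m]$ is routed through $H_1$, using $[L,H^2]\subseteq H_1$ and the outer hypothesis with $k=1$, so that it lands in $[H_1,(H^2)^m]\subseteq H^2$. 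For the inner step I assume $[L,(H^2)^{m+n}]+[(H^2)^{m+n},L]\subseteq (H^2)^n$ and treat $(H^2)^{m+n+1}=[(H^2)^{m+n},H^2]$: the summand $[[L,(H^2)^{m+n}],H^2]$ descends by the inner hypothesis into $[(H^2)^n,H^2]=(H^2)^{n+1}$, while $[[L,H^2],(H^2)^{m+n}]$ is again pushed into $[H_1,(H^2)^{m+n}]\subseteq(H^2)^{n+1}$ by the outer hypothesis with $k=n+1$. In each case the companion brackets with $L$ on the right, coming from $[(H^2)^{m+n+1},L]$, are handled identically via the second derivation identity and the symmetric forms of the same facts.

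I expect the main obstacle to be exactly the ``second'' summand, the one in which $L$ does not immediately meet enough copies of $H^2$. A naive single induction on $n$ founders here, since bounding $[[L,H^2],(H^2)^{m+n}]$ crudely by $[L,(H^2)^{m+n}]$ returns only $(H^2)^n$, which is one level short. The decisive move is therefore to invoke Lemma \ref{l:lemma 1} to absorb $[L,H^2]$ into the quasi-ideal $H_1$ before descending, and then to apply the $(m-1)$-step hypothesis inside $H_1$; this is precisely what forces the outer induction on $m$ and makes the two inductions interlock. Once this routing is in place, everything else reduces to the routine bracket expansions already displayed in the proofs of Lemmas \ref{l:lemma 2} and \ref{l:lemma 3}.
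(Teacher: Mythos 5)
Your proof is correct and follows essentially the same route as the paper's: a double induction on $m$ and $n$ with base case Lemma \ref{l:lemma 3}, expansion of $(H^2)^{m+n+1}=[(H^2)^{m+n},H^2]$ via the two Leibniz identities, control of the first summand by Lemma \ref{l:lemma 2} (inner base) or the inner hypothesis, and absorption of $[L,H^2]$ and $[H^2,L]$ into the quasi-ideal $H_1$ so that the $(m-1)$-step hypothesis can be applied inside $H_1$. The only cosmetic difference is that the paper phrases the outer induction as a strong induction on $m$, though it likewise uses only the case $m-1$.
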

\begin{proof} We use a double induction on $m,n$. The case $m=1$ is given by Lemma \ref{l:lemma 3}. So suppose that the result holds whenever $m<r$ where $r>1$, and let $H$ be an $r$-step subquasi-ideal of $L$. Let $n=1$. Then
\begin{align}
& [L,(H^2)^{r+1}]+[(H^2)^{r+1},L] \nonumber \\
& =[L,[(H^2)^r,H^2]]+[[(H^2)^r],H^2],L] \nonumber \\
& \subseteq [[L,(H^2)^r],H^2]+[[L,H^2],(H^2)^r]+[(H^2)^r,[H^2,L]] \nonumber \\
& \hphantom{\subseteq} +[[(H^2)^r,L],H^2] \nonumber \\
& \subseteq [H,H^2]+[H_1,(H^2)^r]+[(H^2)^r,H_1] \subseteq H^2, \nonumber
\end{align} using Lemma \ref{l:lemma 2}, the fact that $H$ is an $(r-1)$-step subquasi-ideal of $H_1$ and that $H^2$ is an ideal of $H$.
\par

So suppose the result holds for $m=r$ and $n=k$. Then
\begin{align}
& [L,(H^2)^{r+k+1}]+[(H^2)^{r+k+1},L] \nonumber \\
& =[L,[(H^2)^{r+k},H^2]]+[[(H^2)^{r+k}],H^2],L] \nonumber \\
& \subseteq [[L,(H^2)^{r+k}],H^2]+[[L,H^2],(H^2)^{r+k}]+[(H^2)^{r+k},[H^2,L]] \nonumber \\
& \hphantom{\subseteq} +[[(H^2)^{r+k},L],H^2] \nonumber \\
& \subseteq [(H^2)^k,H^2]+[H_1,(H^2)^{r+k}]+[(H^2)^{r+k},H_1] \subseteq (H^2)^{k+1}, \nonumber
\end{align} again using that $H$ is an $(r-1)$-step subquasi-ideal of $H_1$.
\end{proof}

\begin{lemma}\label{l:lemma 5} Let $H$ be an $m$-step quasi-ideal of $L$. Then
\[ [L,H^{(m+n+1)}]+[H^{(m+n+1)},L]\subseteq H^{(m+n)}.
\]
\end{lemma}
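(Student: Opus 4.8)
The plan is to prove the inclusion by induction on $n \geq 1$, keeping $m$ (and a subquasi-ideal chain realising $H$) fixed throughout, and to reduce everything to two facts. First, consecutive terms of the derived series are ideals of one another: since $A^2$ is a two-sided ideal of any subalgebra $A$ (an immediate consequence of the Leibniz identity), taking $A=H^{(k)}$ gives $[H^{(k)},H^{(k+1)}]+[H^{(k+1)},H^{(k)}]\subseteq H^{(k+1)}$ for every $k$. Second, I would record the single clean estimate
\[ [L,H^{(m+1)}]+[H^{(m+1)},L]\subseteq H,
\]
which is the only place the $m$-step hypothesis enters. This follows from Lemma \ref{l:lemma 2} once one checks that $H^{(m+1)}\subseteq (H^2)^m$: indeed $H^{(m+1)}=(H^2)^{(m)}$ is the $m$-th derived subalgebra of $H^2$, and for any algebra $A$ the $m$-th derived subalgebra lies inside the $m$-th lower central term (an easy induction using $A^{(k+1)}=[A^{(k)},A^{(k)}]\subseteq[A^k,A^k]\subseteq[A^k,A]=A^{k+1}$), so $H^{(m+1)}\subseteq(H^2)^m$ and Lemma \ref{l:lemma 2} applies to both summands.

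With these in hand the induction is short. For the base case $n=1$ I would write $H^{(m+2)}=[H^{(m+1)},H^{(m+1)}]$ and expand with the Leibniz identity, so that $[L,H^{(m+2)}]\subseteq[[L,H^{(m+1)}],H^{(m+1)}]$; the clean estimate replaces the inner bracket by $H$, and the ideal property then yields $[H,H^{(m+1)}]\subseteq H^{(m+1)}$, as required. The term $[H^{(m+2)},L]$ is treated identically after expanding it via the derivation property of $R_x$. For the inductive step one repeats this verbatim one level deeper: from $[L,H^{(m+n+1)}]\subseteq H^{(m+n)}$ one gets $[L,H^{(m+n+2)}]\subseteq[[L,H^{(m+n+1)}],H^{(m+n+1)}]\subseteq[H^{(m+n)},H^{(m+n+1)}]\subseteq H^{(m+n+1)}$, again by the ideal property, and symmetrically for the other summand.

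The main obstacle is resisting the temptation to mimic the proof of Lemma \ref{l:lemma 4} directly. There one converts the action of $L$ on a shallow factor $H^2$ into membership in $H_1$ and applies the outer induction along the chain to a deep factor $(H^2)^r$; but the derived series admits no such shallow/deep splitting, since $H^{(m+2)}=[H^{(m+1)},H^{(m+1)}]$ has two factors of equal depth, and carrying out the $L\to H_1$ conversion loses exactly one level and produces $H^{(m-1)}$ rather than the required $H^{(m)}$. The crucial realisation is therefore that the $m$-step structure should be invoked only once, through Lemma \ref{l:lemma 2}, to bound $[L,H^{(m+1)}]$ by $H$, after which the derived-series terms are pushed down purely by their ideal property; verifying the index bookkeeping in $H^{(m+1)}\subseteq(H^2)^m$ is the only genuinely delicate point.
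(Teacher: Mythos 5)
Your argument is correct, and it is genuinely different from what the paper does: the paper gives no written proof at all, only the remark that the result ``follows from a similar double induction to that used in Lemma \ref{l:lemma 4}'', i.e.\ an outer induction on $m$ along the chain $H=H_m \, qu \, \cdots \, qu \, H_0=L$ combined with an inner induction on $n$. You instead absorb the entire dependence on $m$ into the single containment $[L,H^{(m+1)}]+[H^{(m+1)},L]\subseteq H$, obtained from Lemma \ref{l:lemma 2} via $H^{(m+1)}=(H^2)^{(m)}\subseteq (H^2)^m$ (the bridging inclusion $A^{(k)}\subseteq A^k$ is checked correctly with the paper's indexing), and then run a single induction on $n$ driven only by the ideal property of derived terms. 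This is cleaner than the double induction and makes transparent exactly where the subquasi-ideal hypothesis is used; what the paper's route buys in exchange is uniformity with Lemmas \ref{l:lemma 2}--\ref{l:lemma 4}, at the cost of the awkward bookkeeping you correctly identify (the two factors of $H^{(m+n+1)}=[H^{(m+n)},H^{(m+n)}]$ have equal depth, so the shallow/deep splitting of Lemma \ref{l:lemma 4} has no direct analogue).

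One point needs tightening. In your base case you reduce $[L,H^{(m+2)}]$ to $[H,H^{(m+1)}]$ and then invoke ``the ideal property'', but the only ideal property you recorded is that $H^{(k+1)}$ is an ideal of $H^{(k)}$, which gives $[H^{(m)},H^{(m+1)}]\subseteq H^{(m+1)}$ and not the required $[H,H^{(m+1)}]+[H^{(m+1)},H]\subseteq H^{(m+1)}$. What you actually need is that $H^{(m+1)}$ is a two-sided ideal of $H$ itself. This is true and easy: if $J$ is an ideal of $A$ then so is $J^2$, since $[A,[J,J]]\subseteq[[A,J],J]\subseteq J^2$ and $[[J,J],A]\subseteq[J,[J,A]]+[[J,A],J]\subseteq J^2$, so by induction every $H^{(k)}$ is an ideal of $H$ (the paper itself uses the analogous fact that $(H^2)^r$ is an ideal of $H$ in Lemma \ref{l:lemma 3}). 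With that fact stated in place of, or in addition to, your ``consecutive terms'' fact, the base case closes; the inductive step needs only the consecutive-terms version and is fine as written.
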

\begin{proof} This follows from a similar double induction to that used in Lemma \ref{l:lemma 4}.
\end{proof}

\begin{propo} If $H$ is a subquasi-ideal of $L$ then $H^{(\omega)}$ and  $(H^2)^{\omega}$ are characteristic ideals of $L$. In particular, a perfect subquasi-ideal is always an ideal.
\end{propo}
\begin{proof} Let $d$ be a derivation of $L$ and form the semi-direct product $K=L\rtimes Fd$. Then $L$ is an ideal of $K$ and so $H$ is a subquasi-ideal of $K$. Hence there is a finite chain 
\[ H=H_m \hspace{.2cm} qu  \hspace{.1cm} H_{m-1}  \hspace{.1cm} qu  \hspace{.1cm} \ldots \hspace{.1cm} qu \hspace{.1cm} H_0 =K. 
\] Then, by Lemma \ref{l:lemma 4},
\[ [K,(H^2)^{\omega}]\subseteq \bigcap_{n=1}^{\infty} [K,(H^2)^{m+n}]\subseteq \bigcap_{n=1}^{\infty}(H^2)^n=(H^2)^{\omega}.
\]
Also,
\[ [K,H^{(m+n+1)}]\subseteq H^{(m+n)},
\] by Lemma \ref{l:lemma 5}. Similarly for multiplication by $K$ on the right, whence $H^{(\omega)}$ is an ideal of $K$. As $d$ was an arbitrary derivation the result follows.
\end{proof}

\begin{coro} Let $H$ be a finite-dimensional solvable Leibniz algebra. If $H^2$ is not nilpotent then it cannot be embedded as a core-free subquasi-ideal of any Leibniz algebra.
\end{coro}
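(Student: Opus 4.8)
The plan is to argue by contradiction, using the preceding Proposition to manufacture a nonzero ideal of the ambient algebra that is trapped inside $H$, thereby violating core-freeness. So suppose, contrary to the claim, that $H$ is isomorphic to a core-free subquasi-ideal of some Leibniz algebra $L$, and identify $H$ with its image. Since $H$ is then a subquasi-ideal of $L$, the Proposition gives at once that $(H^2)^{\omega}=\bigcap_{n=1}^{\infty}(H^2)^n$ is a (characteristic) ideal of $L$.

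Next I would locate this ideal inside $H$. By definition $(H^2)^{\omega}\subseteq (H^2)^1=H^2=[H,H]\subseteq H$, so $(H^2)^{\omega}$ is an ideal of $L$ contained in $H$, and hence is contained in the core of $H$ in $L$. As $H$ is core-free, this forces $(H^2)^{\omega}=0$.

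Finally I would invoke finite-dimensionality to convert the vanishing of the infinite intersection $(H^2)^{\omega}$ into the nilpotency of $H^2$. Because $H$ is finite-dimensional, so is $H^2$, and the lower central series $H^2=(H^2)^1\supseteq (H^2)^2\supseteq\cdots$ is a descending chain of subspaces, which must therefore stabilize: there is some $k$ with $(H^2)^k=(H^2)^{k+1}=\cdots=(H^2)^{\omega}=0$. But $(H^2)^k=0$ says precisely that $H^2$ is nilpotent, contradicting the hypothesis; hence no such embedding can exist.

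Given the Proposition, the argument is very short, so there is no real obstacle beyond handling the interface between the infinite intersection $(H^2)^{\omega}$ and the terminating lower central series, which is exactly the point at which finite-dimensionality of $H$ is essential. It is worth observing that solvability of $H$ is not strictly used in this chain of reasoning — for instance, if $H^2$ is perfect the same proof applies via $(H^2)^{\omega}=H^2$ — so the solvability hypothesis mainly serves to fix the natural setting in which a non-nilpotent derived algebra $H^2$ is the exceptional phenomenon being excluded.
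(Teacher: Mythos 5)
Your argument is correct and is essentially the paper's own proof: both invoke the preceding Proposition to see that $(H^2)^{\omega}$ is an ideal of $L$ contained in $H$ (hence in $H_L$), and both use finite-dimensionality to conclude that this ideal is nonzero when $H^2$ is not nilpotent, contradicting core-freeness. Your closing remark that solvability is not actually needed is accurate but incidental; the paper's one-line proof makes the same use of the Proposition.
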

\begin{proof} If $H^2$ is not nilpotent then $(H^2)^{\omega}$ is a non-trivial ideal of $L$ contained in $H_L$.
\end{proof}
\medskip

We say that $x\in L$ is a {\em left Engel element} of $L$ if, for each $y\in L$ there exists $n=n(y)$ such that $R_x^n(y)=0$.

\begin{lemma} If $H$ is a quasi-ideal of $L$ which is generated by left Engel elements of $L$, then $H$ is an ideal of $L$.
\end{lemma}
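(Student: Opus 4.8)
The plan is to reduce everything to the generating left Engel elements and then propagate the ideal property through the subalgebra structure by means of the Leibniz identity. Write $H=\langle E\rangle$, where $E$ is a set of left Engel elements of $L$ contained in $H$ (recall that a quasi-ideal is automatically a subalgebra). The first and main step is to show that each generator already behaves like an ideal element: for every $e\in E$ and every $x\in L$ one has $[x,e]\in H$ and $[e,x]\in H$.

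To establish $[x,e]\in H$ I would argue by cases. If $x\in H$ there is nothing to prove, since $H$ is a subalgebra. If $x\notin H$, the quasi-ideal property (applied to $K=Fx$) gives $[x,e]=\mu x+g_{1}$ with $\mu\in F$ and $g_{1}\in H$. Because $e\in H$ and $H$ is a subalgebra, $R_{e}$ maps $H$ into $H$, so a routine induction yields $R_{e}^{\,n}(x)=\mu^{n}x+g_{n}$ with $g_{n}\in H$ for every $n$. As $e$ is a left Engel element, $R_{e}^{\,n}(x)=0$ for some $n$, whence $\mu^{n}x=-g_{n}\in H$; since $x\notin H$ this forces $\mu=0$, that is $[x,e]=g_{1}\in H$. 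Lemma \ref{l:lemma 0} then immediately gives $[e,x]\in H$ as well.

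With the generators handled, I would introduce the subspace $M=\{h\in H:[x,h]\in H\text{ and }[h,x]\in H\text{ for all }x\in L\}$. By the first step $E\subseteq M$, and $M$ is visibly a subspace of $H$. The crux of the second step is to verify that $M$ is a subalgebra. For $a,b\in M$ and $x\in L$, the derivation property of $R_{x}$ gives $[[a,b],x]=[[a,x],b]+[a,[b,x]]$, while the Leibniz identity gives $[x,[a,b]]=[[x,a],b]-[[x,b],a]$. In every term one factor is one of $[a,x],[x,a],[b,x],[x,b]$, which lie in $H$ because $a,b\in M$, and the other factor lies in $H$ because $a,b\in H$; hence each term lies in $[H,H]\subseteq H$. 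Thus $[a,b]\in M$.

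Since $M$ is a subalgebra of $H$ containing the generating set $E$, we conclude $H=\langle E\rangle\subseteq M$, so $M=H$; that is, $[L,H]+[H,L]\subseteq H$ and $H$ is an ideal of $L$. The only genuinely delicate point is the first step: converting the pointwise Engel condition into the vanishing of the scalar $\mu$. This hinges on keeping the ``$x$-component'' of $R_{e}^{\,n}(x)$ exactly equal to $\mu^{n}x$ while the remainder stays inside the subalgebra $H$, which is precisely what the subalgebra property of $H$ (together with $e\in H$) guarantees.
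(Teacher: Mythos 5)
Your proof is correct and follows essentially the same route as the paper: the quasi-ideal property gives $[x,e]=\mu x+g_1$ with $g_1\in H$, iterating $R_e$ inside the subalgebra $H$ and invoking the left Engel condition forces $\mu=0$, and Lemma \ref{l:lemma 0} then supplies $[e,x]\in H$. The only difference is that you explicitly verify, via the subalgebra $M=\{h\in H:[L,h]+[h,L]\subseteq H\}$, that the ideal property passes from the generating set to all of $H$ --- a closure step the paper leaves implicit --- and your verification of it is sound.
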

\begin{proof} Let  $y\in L$  and  let  $x$  be one  of the  generators  of $H$  which is a  left Engel  element  of $L$.   Then  there  exists  $x_1 \in  H$  and  $\lambda \in F$  such  that $[y, x] = \lambda y + x_1$  and so for every $n$ we have $R_x^n(y)  = \lambda^ny + x_n$,  where $x_n  \in  H$. As  $R_x^m(y)  = 0$  for  some  $m$  we  have  $\lambda^my\in  H$,  and  so  $y \in  H$  or  $\lambda  =  0$;  in either  case  $[y, x] \in  H$. Also, $[x,y]\in H$ by Lemma \ref{l:lemma 0}, so $H$ is an ideal of $L$.
\end{proof}
\medskip

It follows that quasi-ideals of Leibniz algebras in which every element is left Engel, which we'll call Engel Leibniz algebras, or which are locally nilpotent are necessarily ideals. A finite-dimensional Engel Leibniz algebra is nilpotent (see \cite{ao}).

\section{The classification of core-free quasi-ideals}
The Lie algebra $L$ is called {\em almost abelian} if $L=L^2\dot{+}Fa$ with ad $a$ acting as the identity map on the abelian ideal $L^2$. Every subalgebra of an almost abelian Lie algebra is a quasi-ideal. If $F$ has characteristic two, we define the Lie algebra $K_2=Fx+Fy+Fz$ with multiplication $[x,y]=z,[y,z]=y,[z,x]=x$. Then $K_2$ is a simple Lie algebra in which every subalgebra is a 2-step subquasi-ideal, and $Fz$ is a quasi-ideal. The following result extends \cite[Theorem 3.6]{amayo} of Amayo to Leibniz algebras.

\begin{theor}\label{t:qi} Let $H$ be a core-free quasi-ideal of a Leibniz algebra $L$ over a field $F$. The one of the following occurs
\begin{itemize}
\item[(i)] $H=0$.
\item[(ii)] $H$ has codimension one in $L$.
\item[(iii)] $L$ is a Lie algebra which is almost abelian or isomorphic to $K_2$ (see Amayo \cite[Theorem 3.6]{amayo})..
\item[(iv)] $L=I\dot{+}Fh$ where $[x,h]=x$, $[h,x]=[h,h]=0$ for all $x\in I$. In this case the quasi-ideals are precisely $Fh$ and the subspaces of $I$.
\end{itemize}
\end{theor}
\begin{proof}
Let $H$ be a quasi-ideal of $L$ and assume that $H$ is not an ideal of $L$. Suppose further that $H$ has codimension at least two in $L$, so there exist $x,y\in L$ which are independent modulo $H$. We can also assume that $x$ does not idealise $H$. Let $a\in H$. Then
\begin{align}
[x,a]=\lambda_ax+a_x,\hspace{.5cm} &  [y,a]=\lambda_1y+a_y, & [x+y,a]=\lambda_2(x+y)+a_{x+y} \nonumber \\
[a,x]=\mu_ax+ {}_x a,\hspace{.5cm} &  [a,y]=\mu_1y+{}_ya, & [a,x+y]=\mu_2(x+y)+{}_{x+y}a \nonumber
\end{align}
for some $\lambda_a, \lambda_1, \lambda_2, \mu_a, \mu_1, \mu_2 \in F$, $a_x, a_y, a_{x+y}, {}_xa, {}_ya, {}_{x+y}a \in H$.
\par

Now $[x+y,a]=[x,a]+[y,a]$, so $\lambda_a=\lambda_1=\lambda_2$. Similarly $\mu_a=\mu_1=\mu_2$. Hence, for every $z\in L$ we have
\begin{align}
[z,a]=\lambda_az+a_z, \hspace{.5cm} [a,z]=\mu_az+{}_za
\end{align}
Now $[z,a]+[a,z]\in I$ so $(\lambda_a+\mu_a)z\in H+I$. So, if there exists $z\in L$ such that $z\notin H+I$, we have $\mu_a=-\lambda_a$; otherwise $L=H+I$ and $\mu_a=0$.
\par

The map $\theta : H \rightarrow F : a\mapsto \lambda_a$ is a Leibniz homomorphism, and so has kernel $K$ of codimension one in $H$. Thus, $H=Fh+K$, $K$ is an ideal of $H$ and $[K,L]+[L,K]\subseteq H$. We can also choose $a\in H$ such that $\lambda_a=1$.
\par

Let $k\in K$, $z\in L$. If $z\in H$ then $[k,z]=[z,k]\in K$. If $z\notin H$ then, since $H$ has codimension at least two in $L$, we can find $w\in L$ independent of $z$ modulo $H$. As $K$ has codimension one in $H$ we can find $\mu_1, \mu_2, \mu_3, \mu_4 \in F$, $k_1,k_2,k_3,k_4\in K$ such that 
\begin{align}
[z,k]=\mu_1a+k_1, & [w,k]=\mu_2a+k_2  \nonumber \\
[k,z]=\mu_3a+k_3, & [k,w]=\mu_4a+k_4. \nonumber
\end{align} 
\par

Suppose that $L\not = H+I$. Then we have that $[k,[z,w]]\in H$ and
\begin{align}
[k,[z,w]] & = [[k,z],w]-[[k,w],z] \nonumber \\
 & = [\mu_3a+k_3,w]-[\mu_4a+k_4,z] \nonumber \\
 & = -\mu_3w+\mu_3\,{}_wa+[k_3,w]+\mu_4z-\mu_4\,{}_za+[k_4,z], \nonumber
\end{align}
which gives that $\mu_3=\mu_4=0$. Similarly, since $[[z,w],k]\in H$,
\begin{align}
[[z,w],k] & = [[z,[w,k]]+[[z,k],w] \nonumber \\
 & = [z,\mu_2a+k_2]+[\mu_1a+k_1,w] \nonumber \\
 & = \mu_2z+\mu_2a_z+[z,k_2]-\mu_1w+\mu_1\,{}_wa+[k_1,w], \nonumber
\end{align}
which implies that $\mu_1=\mu_2=0$, and $K$ is an ideal of $L$.
\par

If $L=H+I$, then $H\cap I$ is an ideal of $L$, since $I$ is abelian. Hence $H\cap I\subseteq H_L=0$. But then
$[K,I]=0$ and $[I,K]\subseteq H\cap I=0$ so, again, $K$ is an ideal of $L$.
\par

Factor out $H_L$, so assume that $H=Fh$ is a one-dimensional quasi-ideal of $L$ of codimension at least two in $L$. Suppose first that $L=Fh\dot{+}I$.
Then $[h,h]\in Fh\cap I=0$, $[h,x]=0$, $[x,h]= x+\lambda_xh$ for all $x\in I$. But $[x,h]+[h,x]\in I$, so $\lambda_xh\in Fh\cap I=0$, so $L$ has the structure given in (iv) above. In this case it is easy to check that the quasi-ideals are precisely $Fh$ and the subspaces of $I$.
\par

So now assume that $L\not =Fh+I$. Then, for each $z\in L$,
\begin{align}
[z,h]=z+\lambda_zh \hbox{ and } [h,z]=-z+\mu_zh,
\end{align}
Clearly $[h,h]=\lambda h$. But $[h,h]\in I$, so $\lambda^2h=[h,[h,h]]=0$ giving
\begin{align} [h,h]=0
\end{align}
Also,
\begin{align}
0=[h^2,z] & = [h,[h,z]]+[[h,z],h] \nonumber \\
 & = [h,-z+\mu_zh]+[-z+\mu_zh,h] \nonumber \\
 & = z-\mu_zh-z-\lambda_zh, \nonumber
\end{align}
so $\mu_z=-\lambda_z$. Hence, for all $z\in  L$,
\begin{align}
[z,h]=z+\lambda_zh \hbox{ and } [h,z]=-z-\lambda_zh
\end{align}
Let $z,w$ be two elements of $L$. Then
\begin{align}
[[z,w],h] & = [z,w]+\lambda_{[z,w]}h \nonumber \\
 & = [z,[w,h]]+[[z,h],w] \nonumber \\
 & = [z,w+\lambda_wh]+[z+\lambda_zh,w] \nonumber \\
 & = [z,w]+\lambda_wz+\lambda_z\lambda_wh+[z,w]-\lambda_zw-\lambda_z\lambda_wh, \nonumber
\end{align}
so
\begin{align}
[z,w]=\lambda_{[z,w]}h-\lambda_wz+\lambda_zw
\end{align}
Applying $h$ on the right to (5) gives
\begin{align}
[[z,w],h] & =[z,w]+\lambda_{[z,w]}h = -\lambda_w[z,h]+\lambda_z[w,h] \nonumber \\
 & = -\lambda_wz -\lambda_w\lambda_zh+\lambda_zw+\lambda_z\lambda_wh, \nonumber
\end{align}
whence
\begin{align}
[z,w]=-\lambda_{[z,w]}h-\lambda_wz+\lambda_zw
\end{align}
Equations (5) and (6) imply that $2\lambda_{[z,w]}=0$.
\par

It follows from (4) that there is a basis $\{z,x_i \mid i\in J\}$ for $L$ such that
\[ [x_i,h]=x_i, [h,x_i]=-x_i \hbox{ for all } i\in J.
\] (so $\lambda_{x_i}=0$ for $i\in J$). Now (6) gives
\begin{align}
[x_i,x_j]=\lambda_{ij}h
\end{align}
for all $i,j\in J$, where $\lambda_{ij}=\lambda_{[x_i,x_j]}$.
\par

Suppose that the characteristic of $F$ is not $2$. Then $\lambda_{[z,w]}=0$ for all $z,w\in L$, and so $[x_i,x_j]=0$ for all $i,j\in J$. Thus 
$A=\Sigma_{i\in J} Fx_i$ is an abelian ideal of codimension one in $L$. Since $h\notin A$, $L=A\dot{+}Fh$ and $A=L^2$. Moreover, $[a,h]=-[h,a]=a$ for all $a\in A$ so $L$ is a Lie algebra.
\par

So suppose now that the characteristic of $F$ is two. If $A$ has at least three elements then
\begin{align}
0 & = [x_i,[x_j,x_k]]-[[x_i,x_j],x_k]+[[x_i,x_k],x_j] \nonumber \\
 & = [x_i,\lambda_{jk}h]-[\lambda_{ij}h,x_k]+[\lambda_{ik}h,x_j] \nonumber \\
 & = \lambda_{jk}x_i+\lambda_{ij}x_k-\lambda_{ik}x_j. \nonumber
\end{align}
Hence $\lambda_{jk}=\lambda_{ij}=\lambda_{ik}=0$ and we have a Lie algebra again.
\par

Finally, suppose that $F$ has characteristic two and that $A$ is two dimensional. If $x,y\in A$, let $[x,y]=\alpha h$, $[y,x]=\beta h$. Then, since $[x,y]+[y,x]\in I$,
\[ 0=[x,[x,y]+[y,x]]=[x,(\alpha+\beta)h]=(\alpha+\beta)x,
\] so $\alpha=\beta$ and $[x,y]=[y,x]$. Moreover, $[x,h]=[h.x]$ for all $x\in A$, and so $L$ is a Lie algebra. 
\end{proof}
\medskip

We will call the algebras given in Theorem \ref{t:qi} (iv), {\em non-Lie almost abelian} Leibniz algebras. It remains to consider subalgebras of codimention one in $L$. This is done in the following Theorem. For non-Lie Leibniz algebras this turns out to be more straightforward than for Lie algebras.

\begin{theor}\label{t:codim} Let $L$ be a Leibniz algebra with a core-free subalgebra $H$ of codimension 1 in $L$. Then either
\begin{itemize}
\item[(i)] $L$ is a Lie algebra and so is given by \cite[Theorem 3.1 and 4.1]{amayo2}, or
\item[(ii)] $H=Fh$ and $L=Fx+Fh$ where $[x,h]=x$, $[h,x]=[x,x]=[h,h]=0$.
\end{itemize}
\end{theor}
\begin{proof}Let $H$ be a subalgebra of $L$ with codimension one in $L$. If $I\subseteq H$ then $I\subseteq H_L=0$ and $L$ is a Lie algebra.
If $H\subset I$ then $I=L$ which is impossible. So suppose that $L=I+H$. Now $I\cap H$ is an ideal of $L$ since $I$ is an abelian ideal of $L$, so
$I\cap H\subseteq H_L=0$ and $L=I\dot{+}H$. Put $I=Fx$. Then
\[ [h,x]=0, [x,h]=\lambda_hx \hbox{ for all } h\in H.
\]
Thus we can choose a basis $\{h,h_i \mid i\in J\}$ for $H$ such that $[x,h_i]=0$, $[x,h]=x$. Put $K=\sum_{i\in J}Fh_i$. Let $[h,h_i]=\lambda h+\mu k$, where $k\in K$. Then
\[  \lambda x=[x,[h,h_i]]=[[x,h],h_i]-[[x,h_i],h]=0,
\] so $\lambda =0$ and $[h,K]\subseteq K$. Similarly, by considering $[x,[h_i,h]]$, we have that $[K,h]=0$. Also, if $k_1,k_2\in K$, by considering $[x,[k_1,k_2]]$
we have that $[K,K]\subseteq K$. It follows that $K$ is an ideal of $L$ and so $K\subseteq H_L=0$ and $H=Fh$. Now, $[h,h]\in H\cap I=0$ and $[x,x]=0$ since $x\in I$, so we have case (ii).
\end{proof}
\medskip

Note that the algebra in Theorem \ref{t:codim} is a non-Lie almost abelian Leibniz algebra, so no new non-Lie Leibniz algebras appear here. It is a cyclic Leibniz algebra generated by $x+h$.

\section{Leibniz algebras in which every subalgebra is a quasi-ideal}
\medskip

Let ${\mathcal Q}$ denote the set of all Leibniz algebras in which every subalgebra is a quasi-ideal. Then the following is easy to check.

\begin{lemma}\label{l:factor} ${\mathcal Q}$ is factor algebra closed.
\end{lemma}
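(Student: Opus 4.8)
The plan is to verify that if $L \in \mathcal{Q}$ and $J$ is an ideal of $L$, then every subalgebra of $L/J$ is a quasi-ideal of $L/J$, i.e.\ $L/J \in \mathcal{Q}$. I would begin by taking an arbitrary subalgebra $\bar{H}$ of the quotient $L/J$ and an arbitrary subspace $\bar{K}$ of $L/J$, with the aim of showing $[\bar{H},\bar{K}]+[\bar{K},\bar{H}] \subseteq \bar{H}+\bar{K}$. The natural first step is to pull these back along the canonical projection $\pi : L \to L/J$. Setting $H = \pi^{-1}(\bar{H})$ and $K = \pi^{-1}(\bar{K})$, I would observe that $H$ is a subalgebra of $L$ (the preimage of a subalgebra under an algebra homomorphism, noting $J \subseteq H$ so $H$ is genuinely a subspace containing $J$) and $K$ is a subspace of $L$.

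Since $L \in \mathcal{Q}$, the subalgebra $H$ is a quasi-ideal of $L$, and therefore it permutes with the subspace $K$; that is, $[H,K]+[K,H] \subseteq H+K$. Applying the projection $\pi$, which is an algebra homomorphism and hence satisfies $\pi([A,B]) = [\pi(A),\pi(B)]$ for subspaces $A,B$, I would deduce $[\bar{H},\bar{K}]+[\bar{K},\bar{H}] = \pi([H,K]+[K,H]) \subseteq \pi(H+K) = \bar{H}+\bar{K}$. Since $\bar{K}$ was an arbitrary subspace of $L/J$, this shows $\bar{H}$ is a quasi-ideal of $L/J$, and since $\bar{H}$ was arbitrary, every subalgebra of $L/J$ is a quasi-ideal, so $L/J \in \mathcal{Q}$.

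The only genuinely delicate point — which I do not expect to be a real obstacle but is worth stating carefully — is the bookkeeping with preimages: one must confirm that $\pi(H) = \bar{H}$ and $\pi(K) = \bar{K}$ exactly (not merely containments), which holds because $\pi$ is surjective and $H,K$ are full preimages, and that $\bar{H} + \bar{K} = \pi(H+K)$. I would also note that the argument does not depend on finite-dimensionality, and that the left/right asymmetry of the Leibniz bracket causes no difficulty because the quasi-ideal condition $[H,K]+[K,H]\subseteq H+K$ is already symmetric in the two bracket orders and $\pi$ respects both. This is precisely why the lemma is, as the authors say, ``easy to check'': the quasi-ideal property is manifestly preserved under homomorphic images.
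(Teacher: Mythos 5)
Your argument is correct and is exactly the standard verification the paper omits when it says the lemma is ``easy to check'': pull back a subalgebra and a subspace to full preimages in $L$, invoke the quasi-ideal property there, and push forward along the surjection $\pi$. Nothing further is needed.
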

\medskip

A Lie algebra in which every subalgebra is a quasi-ideal is abelian or almost abelian (see \cite[Theorem 3.8]{amayo}). So, in studying the non-Lie Leibniz algebras in ${\mathcal Q}$, we consider two cases: where $L/I$ is abelian and where $L/I$ is almost abelian. First we need some preliminary results.

\begin{lemma}\label{l:dim} Let $L\in {\mathcal Q}$. Then $\dim I\leq 1$.
\end{lemma}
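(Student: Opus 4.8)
The plan is to exploit that, since $I$ is abelian and $[L,I]=0$, every subspace of $I$ is a subalgebra, hence---because $L\in{\mathcal Q}$---a quasi-ideal. First I would show that each one-dimensional subspace $Fv$ with $v\in I$ is in fact an ideal: the quasi-ideal relation reads $[v,K]+[K,v]\subseteq Fv+K$, and since $[K,v]\subseteq[L,I]=0$ it reduces to $[v,K]\subseteq Fv+K$; taking $K=Fz$ with $z\notin I$ and intersecting with $I$ forces $[v,z]\in Fv$ for every $z\in L$. Thus every line in $I$ is an ideal of $L$; in particular $[x^2,z]\in Fx^2$ for all $x,z\in L$.

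Assume for contradiction that $\dim I\geq 2$. Because $I$ is spanned by squares, one can then select $x,y\in L$, independent modulo $I$, with $x^2,y^2$ linearly independent (and nonzero, so $x,y\notin I$); this selection is possible once $\dim(L/I)\geq 2$, the remaining case being treated separately. Since $[x^2,z]\in Fx^2$ and $[y^2,z]\in Fy^2$, both $H_1=Fx+Fx^2$ and $H_2=Fy+Fy^2$ are subalgebras, hence quasi-ideals. Applying the quasi-ideal relation to $H_1$ with $K=Fy$ and to $H_2$ with $K=Fx$, and using $[x^2,y]\in Fx^2$, $[y^2,x]\in Fy^2$, I would pin both $[x,y]$ and $[y,x]$ into $(Fx+Fy+Fx^2)\cap(Fx+Fy+Fy^2)=Fx+Fy$, the intersection being exact because $x,y,x^2,y^2$ are linearly independent. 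Consequently $[x,y]+[y,x]\in(Fx+Fy)\cap I=0$, whence $(x+y)^2=x^2+y^2$.

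The contradiction then comes from a third quasi-ideal. The space $H_3=F(x+y)+F(x+y)^2=F(x+y)+F(x^2+y^2)$ is again a subalgebra, and $[x+y,x]=x^2+[y,x]$ with $[y,x]\in Fx+Fy$. The relation $[H_3,Fx]\subseteq H_3+Fx=Fx+Fy+F(x^2+y^2)$ then forces the $I$-component $x^2$ of $[x+y,x]$ to lie in $F(x^2+y^2)$, i.e. $x^2\in F(x^2+y^2)$, contradicting the independence of $x^2$ and $y^2$. This rules out $\dim I\geq 2$ whenever the pair $x,y$ above is available.

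The step I expect to be the main obstacle is precisely the availability of that pair, that is, the case $\dim(L/I)\leq 1$. When $L=I$ the algebra is abelian and $I=0$, so the only genuine difficulty is $\dim(L/I)=1$, which is the non-Lie almost abelian configuration of Theorem \ref{t:qi}(iv). There no two elements are independent modulo $I$, so the argument above collapses and this family must be analysed directly, using that $L/I\in{\mathcal Q}$ (Lemma \ref{l:factor}) is abelian or almost abelian and working out its subalgebra lattice explicitly. I would expect this low-quotient case to demand the most care and to be where the real content of the bound $\dim I\leq 1$ lies.
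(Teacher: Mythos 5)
Your opening step (every line of $I$ is an ideal of $L$) coincides with the paper's, but your treatment of the case $\dim(L/I)\geq 2$ is a genuinely different route: the paper passes to $L/F[x,x]$ for a cyclic non-ideal subalgebra $\langle x\rangle$ and invokes the classification in Theorems \ref{t:qi} and \ref{t:codim}, whereas you extract a direct linear-algebra contradiction from the three quasi-ideals $Fx+Fx^2$, $Fy+Fy^2$ and $F(x+y)+F(x+y)^2$. That computation checks out, but the existence of $x,y$ independent modulo $I$ with $x^2,y^2$ independent is asserted rather than proved: since $(\lambda u+a)^2=\lambda^2u^2+\lambda[a,u]$ for $a\in I$, squares are not linear in their arguments, and closing this point needs a short case analysis using $[a,u]\in Fa$.

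The genuine gap is the case $\dim(L/I)=1$, which you explicitly postpone --- and it cannot be closed, because the statement fails there. Take $L=I\dot{+}Fh$ as in Theorem \ref{t:qi}(iv) with $I=Fa+Fb$ two-dimensional, $[a,h]=a$, $[b,h]=b$ and all other products zero. This is a Leibniz algebra whose span of squares is all of $I$ (e.g.\ $(h+a)^2=a$, $(h+b)^2=b$). Its subalgebras are exactly the subspaces $W$ of $I$ (which are ideals) and the spaces $Fh+W$ with $W\subseteq I$; for the latter, writing $k=\lambda h+v\in K$ and $u=\mu h+w\in Fh+W$, one gets $[u,k]=\lambda w\in W$ and $[k,u]=\mu v=\mu(k-\lambda h)\in K+Fh$, so $[Fh+W,K]+[K,Fh+W]\subseteq (Fh+W)+K$. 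Hence every subalgebra is a quasi-ideal, $L\in{\mathcal Q}$, yet $\dim I=2$. The paper's own proof founders on exactly this configuration: it asserts that case (iv) of Theorem \ref{t:qi} ``clearly cannot arise'' for $L/F[x,x]$, but taking $x=h$ above gives $\langle h\rangle=Fh$ core-free and not an ideal with $L/F[h,h]=L$ precisely of type (iv). So your instinct that the real content of the bound lives in the low-quotient case was right; what lives there, however, is a counterexample rather than a harder argument, and neither your proposal nor the paper's proof establishes the lemma as stated.
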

\begin{proof} By Lemma \ref{l:factor} and \cite[Theorem 3.8]{amayo}, $L/I$ is quasi-abelian. Let $x\in I$, $y\in L$. Then $[x,y]=\alpha x + \beta y\in I$,since $<x>=Fx$ is a quasi-ideal, so $\beta y\in I$. If $y\in I$ then $[x,y]=0$; if $y\notin I$ then $\beta=0$ and $[x,y]=\alpha x$. Hence $[Fx,L]\subseteq Fx$. Also, $[L,Fx]=0$, so every subspace of $I$ is an ideal of $L$. 
\par

Suppose that $I\not = 0$. If every subalgebra of $L$ is an ideal then $L$ is as described in \cite[Theorem A]{kss} and $\dim I=1$. So suppose that $L$ has a subalgebra which is not an ideal of $L$. Then it has a cyclic subalgebra $<x>$ which is not an ideal of $L$. Clearly $<x>=Fx+F[x,x]$ and $<x>_L=F[x,x]$, since $F[x,x]\in I$ and so is an ideal of $L$. It follows that $L/F[x,x]$ is given by Theorem \ref{t:qi} or Theorem \ref{t:codim}. Clearly cases (i) and (iv) of the first of these cannot arise. So either $L/F[x,x]$ is quasi-abelian or is given by Theorem \ref{t:codim} (ii).
\par

Suppose first that the latter holds.Then we have that $L/F[h,h]$ is given by Theorem \ref{t:codim} (ii), using the same notation as there. Clearly $I=Fx+F[h,h]$ and
\[ [x,h]=x+\alpha [h,h],\, [h,x]=[x,x]=0 \hbox{ for some } \alpha \in F.
\]
But now $<x+h>=L$ so $L$ is cyclic. Put $L=Fy+Fy^2+Fy^3$ with $[y^3,y]=\alpha y^2+\beta y^3$. Then $<y^2>=Fy^2$ is a quasi-ideal, and so $[y^2,y]=y^3=\lambda y^2+ \mu y$, which is impossible, so this case can't arise.
\par

So suppose now that $L/F[x,x]$ is quasi-abelian. Then $I\subseteq F[x,x]$ so $I=F[x,x]$ and $\dim I=1$.
\end{proof}

\begin{lemma} Let $L$ be a Leibniz algebra with $\dim I=1$ and $[x,x]\not = 0$ for all $x\in L\setminus I$. Then $I\subseteq Z(L)$.
\end{lemma}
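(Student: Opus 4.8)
The plan is to reduce the assertion to a single scalar computation inside the one-dimensional ideal $I$. Write $I=Fw$ with $w\neq 0$. Recall from the introduction that $[L,I]=0$, so $[y,w]=0$ for every $y\in L$; consequently $w$ (and hence all of $I$) already lies in the right annihilator, and proving $I\subseteq Z(L)$ amounts to showing the single condition $[w,y]=0$ for all $y\in L$. Two further facts from the introduction will be used: since $[x^2,y]\in I$ and $I$ is spanned by squares, we get $[I,L]\subseteq I$, so in particular $[w,y]\in I=Fw$, say $[w,y]=\lambda_y w$ with $\lambda_y\in F$; and $[w,w]\in[I,I]\subseteq[L,I]=0$.

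The core idea is a ``completing the square'' argument that exploits the hypothesis as an anisotropy condition on the squaring map. Suppose, for contradiction, that $\lambda_{x_0}\neq 0$ for some $x_0\in L$. Since $[w,w]=0$ we must have $x_0\notin I$. I would then perturb $x_0$ within $I$: using $[x_0,w]=0$ (from $[L,I]=0$) and $[w,w]=0$, compute $(x_0+tw)^2=x_0^2+t([x_0,w]+[w,x_0])+t^2[w,w]=x_0^2+t\lambda_{x_0}w$ for any $t\in F$. Writing $x_0^2=\phi w$ for the appropriate scalar $\phi$ (legitimate since $x_0^2\in I$), I choose $t=-\phi/\lambda_{x_0}$, which is permissible because $\lambda_{x_0}\neq 0$. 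Then $(x_0+tw)^2=0$, while $x_0+tw\notin I$ because its $x_0$-component is nonzero. This contradicts the hypothesis that $[z,z]\neq 0$ for every $z\in L\setminus I$. Hence $\lambda_y=0$ for all $y$, i.e. $[w,y]=0$ for all $y$, and therefore $I=Fw\subseteq Z(L)$.

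I do not expect a serious obstacle here: the genuine content is the observation that $[w,y]\in Fw$, which forces the left action of $w$ to be scalar-valued, and everything else is the one-line computation above. The point to be careful about is characteristic-independence — the identity $(u+v)^2=u^2+[u,v]+[v,u]+v^2$ and the homogeneity $(sx)^2=s^2x^2$ hold over any field, and the division by $\lambda_{x_0}$ is valid precisely because that scalar is nonzero — so the argument should go through uniformly, including in characteristic two. The only inputs that must be correctly invoked are $[L,I]=0$ and $[I,L]\subseteq I$ from the introduction.
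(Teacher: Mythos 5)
Your proof is correct and takes essentially the same approach as the paper: both complete the square, perturbing an element $x_0$ with $[w,x_0]=\lambda_{x_0}w\neq 0$ by the multiple $-\phi/\lambda_{x_0}$ of $w$ (the paper's $-\lambda/\mu$) to manufacture an element of $L\setminus I$ with zero square, contradicting the hypothesis. The preliminary observations you invoke ($[L,I]=0$, $[I,L]\subseteq I$, $[w,w]=0$) are exactly the facts the paper uses implicitly, so there is nothing to add.
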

\begin{proof} Let $I=Fa$, $x\notin I$ and suppose that $[x,x]=\lambda a$, $[a,x]=\mu a$ for some $\lambda, \mu \in F$, $\lambda\not =0$. If $\mu\not =0$, then 
\[ \left[x-\frac{\lambda}{\mu}a,x-\frac{\lambda}{\mu}a\right]=\lambda a - \lambda a =0 \hbox{ and } x-\frac{\lambda}{\mu}a\notin I,
\] contradicting the hypothesis.
\end{proof}

\begin{lemma}\label{l:ab} Let $L\in {\mathcal Q}$ and let $L/I$ be abelian with $I\neq 0$. Then one of the following holds
\begin{itemize}
\item[(i)] $[x,x]\neq 0$ for all $x\in L\setminus I$;
\item[(ii)] $L=Fx+Fa$ with $[a,x]=a$ and all other products zero; or
\item[(iii)] $[x,x]=0$ for $x\in L\setminus I$ implies that $x\in Z(L)$.
\end{itemize}
\end{lemma}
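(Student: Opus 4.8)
The plan is to use that, by Lemma~\ref{l:dim}, $\dim I=1$, so we may write $I=Fa$; recall also that $[L,I]=0$, that $[I,L]\subseteq I$, and that $[u,v]\in I$ for all $u,v\in L$ since $L/I$ is abelian. If (i) holds there is nothing to prove, so I would assume it fails and fix some $x\in L\setminus I$ with $[x,x]=0$. Then $\langle x\rangle=Fx$, and since $L\in{\mathcal Q}$ the subspace $Fx$ is a quasi-ideal. The whole argument then rests on extracting from this single fact a precise description of $[x,L]$ and $[L,x]$.

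First I would show that $[x,L]=0$ while $[L,x]=F[a,x]$. Take any $y\in L$ and apply the quasi-ideal property of $Fx$ with $K=Fy$, so that $[x,y],[y,x]\in Fx+Fy$; since $L/I$ is abelian these brackets also lie in $I=Fa$. If $y\notin Fx+Fa$ then $a,x,y$ are linearly independent, so $(Fx+Fy)\cap Fa=0$ and hence $[x,y]=[y,x]=0$. If instead $y=\alpha x+\beta a$, then using $[x,a]\in[L,I]=0$ and $[x,x]=0$ one gets $[x,y]=0$ and $[y,x]=\beta[a,x]$. Combining the two cases gives $[x,L]=0$ and $[L,x]=F[a,x]$, which is the technical heart of the lemma.

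With this in hand the trichotomy falls out. If $[a,x]=0$ for the chosen $x$ then $[x,L]=[L,x]=0$, so $x\in Z(L)$; since the same reasoning applies to every square-zero element of $L\setminus I$, if no such element has $[a,\cdot]$ nonzero we land in case (iii). Otherwise some square-zero $x\in L\setminus I$ satisfies $[a,x]\neq0$, and after rescaling $x$ I may assume $[a,x]=a$ (this preserves $[x,x]=0$). I would then claim $\dim L=2$. Suppose not and pick $z\notin Fx+Fa$; by the previous paragraph $[x,z]=[z,x]=0$. Testing the quasi-ideal $Fx$ against $K=F(z+a)$ gives $[x,z+a]+[z+a,x]=[a,x]=a$, yet $a\notin Fx+F(z+a)$ because $a,x,z$ are independent, contradicting that $Fx$ is a quasi-ideal. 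Hence $L=Fx+Fa$, and since $[x,x]=[x,a]=[a,a]=0$ and $[a,x]=a$ this is exactly case (ii).

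I expect the main obstacle to be the first step: correctly splitting on whether $y\in Fx+Fa$ and using $\dim I=1$ to force the off-diagonal brackets to vanish, thereby isolating $[a,x]$ as the only surviving parameter. Once $[x,L]=0$ and $[L,x]=F[a,x]$ are established, the separation into (ii) and (iii) is immediate, the only ingenuity being the choice of the test subspace $F(z+a)$ that detects the failure of the quasi-ideal condition in dimension at least three.
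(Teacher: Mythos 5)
Your proof is correct and takes essentially the same approach as the paper: both arguments test the quasi-ideal $Fx$ against the one-dimensional subspaces $Fy$ and $F(y+a)$ and use $\dim I = 1$ together with linear independence to force the brackets into $Fa$ and then to vanish. The only difference is organizational --- you split on whether $[a,x]=0$ rather than on whether $L = Fx + I$ --- which has the minor virtue of cleanly routing the degenerate sub-case where $L = Fx + Fa$ is abelian into case (iii).
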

\begin{proof} Suppose there exists $x\in L\setminus I$ with $[x,x]=0$, so (i) doesn't hold. Put $I=Fa$. If $L=Fx+I$, the only non-zero product is $[a,x]=\lambda a$. Replacing $x$ by $(1/\lambda)x$ gives the multiplication in (ii).
\par

If $L\neq Fx+I$, choose $y\notin Fx+I$. Then $[y,x]=\lambda a=\alpha y+\beta x$ for some $\lambda, \alpha, \beta \in F$, since $<x>=Fx$ is a quasi-ideal. This implies that $\alpha y\in Fx+I$ and so $\alpha=0$. But now $\beta=0$ and $[y,x]=0$. Suppose that $[a,x]=\mu a$. Then $[y+a,x]=\mu a = \gamma(y+a)+\delta x$ and a similar argument shows that $\mu=\gamma=\delta=0$. It follows that $[Fx+I,x]=0$ and so $[L,x]=0$. 
\par

Now let $y$ be any element of $L$. Then $[x,y]=\lambda a =\alpha x +\beta y$, whence
\[ 0=\lambda [x,a]=\alpha [x,x]+\beta [x,y] = \beta \lambda a.
\]
But then $\beta =0$ which implies that $\alpha=\lambda=0$, or $\lambda a=0$. In either case $[x,y]=0$ and $[x,L]=0$, resulting in case (iii).
\end{proof}
\medskip

We call a Leibniz algebra $L$ {\em extraspecial} if $Z(L)$ is one dimensional and $L/Z(L)$ is an abelian Lie algebra. This class of algebras was introduced in \cite{kss}. They are both right and left Leibniz algebras (sometimes called {\em symmetric} Leibniz algebras).

\begin{theor}\label{t:ab}  Let $L/I$ be abelian with $I\neq 0$. Then $L\in {\mathcal Q}$ if and only if one of the following holds
\begin{itemize}
\item[(i)] $L=Fb+Fa$ where the only non-zero products are $[b,b]=a$, $[a,b]=a$;
\item[(ii)] $L=E\oplus Z$ where $Z\subseteq Z(L)$ and $E$ is an extraspecial subalgebra such that $[x,x]\neq 0$ for every $x\in E\setminus Z(E)$.  
\end{itemize}
\end{theor}
\begin{proof} Let $L\in {\mathcal Q}$. Suppose first that $I\not \subseteq Z(L)$. Put $I=Fa$ and let $R_x\mid_I : L \rightarrow I : a\mapsto [a,x]$. This is a Leibniz homomorphism with kernel $C_L(I)$ and so $C_L(I)$ has codimension one in $L$. Put $L=C_L(I)\dot{+}Fb$. Let $c_1,c_2\in C_L(I)$, $[x,y]=\lambda_{x,y}a$ for all $x,y\in L$ and $[a,b]=\mu a$ where $0\neq \mu\in F$. Then
\[ 0=[c_1,[c_2,b]]=[[c_1,c_2],b]-[[c_1,b],c_2]=\lambda_{c_1,c_2}[a,b]=\lambda_{c_1,c_2}\mu\, a,
\]
so $\lambda_{c_1,c_2}=0$ and $C_L(I)$ is abelian. Put $C_L(I)=C\oplus I$.  Now, $Fa+Fb$ is a two-dimensional non-Lie Leibniz algebra, of which there are only two, both cyclic, with $[b,b]=a$ and $[a,b]=0$ or $a$, the first being nilpotent and the second solvable. Then Lemma \ref{l:ab}(i) implies that $C=0$ and only (i) can hold. If  Lemma \ref{l:ab} (ii) holds than we have case (i) by putting $b=x+a$. If  Lemma \ref{l:ab} (iii) holds, then $C=Z(L)$ and we have a special case of (ii) by putting $E=Fa+Fb$.
\par

Now suppose that $I\subseteq Z(L)$. Then Lemma \ref{l:ab} (ii) cannot occur. Let $U$ be a subalgebra of $L$. If $U\subseteq Z(L)$ it is an ideal of $L$. If $U \not\subseteq Z(L)$ then there exists $u\in U\setminus Z(L)$. Lemma \ref{l:ab} (i) and (ii) imply that $0\neq [u,u]\in U\cap I$, so $I\subseteq U$. But then $U/I$ is an ideal of $L/I$ and so $U$ is an ideal of $L$. It follows that (iii) holds, by \cite[Theorem A]{kss}.
\par

Conversely, if (i) holds then the only subalgebras are $Fa$, $F(b-a)$ and $L$, all of which are quasi-ideals of $L$. so suppose that (ii) holds and let $U$ be a subalgebra of $L$. If $U\subseteq Z(L)$ it is an ideal of $L$, so suppose that $U\not \subseteq Z(L)$. Let $u=x+z \in U\setminus Z(L)$, where $x\in E\setminus Z(E)$. Then $0\neq [u,u]=[x,x] \in U\cap Z(L)$, so $Z(L)\subseteq U$. Let $V$ be a subspace of $L$. Then
\[ \left[\frac{U}{Z(L)},\frac{V+Z(L)}{Z(L)}\right]+\left[\frac{V+Z(L)}{Z(L)},\frac{U}{Z(L)}\right]\subseteq \frac{U+V}{Z(L)},
\] so $[U,V]+[V,U]\subseteq U+V$ and $U$ is a quasi-ideal of $L$. 
\end{proof}
\medskip}

In case (ii) of Theorem \ref{t:ab} every subalgebra of $L$ is an ideal (see \cite{kss}). However, that is not the case for the algebra in (i), since $\langle b-a\rangle=F(b-a)$ is not an ideal. Next we consider the case where $L/I$ is almost abelian.

\begin{theor}\label{t:alab}  Let $L\in {\mathcal Q}$ and let $L/I$ be almost abelian with $I\neq 0$. Then 
\begin{itemize}
\item[(i)] $I\subseteq Z(L)$;
\item[(ii)] $F$ is a non-perfect field of characteristic $2$, $Z(L)=Fz$, there is an ideal $B/Z(L)$ of $L/Z(L)$ of codimension one in $L$ and $L=B\dot{+}Fh$, where $B=C\dot{+}Fz$ is an extraspecial Leibniz algebra with $C$ a subspace of $B$ such that $[c,c']=[c',c]=\lambda_{\{c,c'\}}z$ ($\lambda_{\{c,c'\}}\in F$, $\lambda_c=\lambda_{\{c,c\}}\in F\setminus F^2$), $[c,h]=[h,c]=c$ for all $c\in C$, and $[h,h]=z$.
\end{itemize}
\end{theor}
\begin{proof} There is an ideal $B/I$ of $L/I$ and an element $h+I\in L/I$ such that $[b,h]=b+\alpha_b a$, $[h,b]=-b+\beta_b a$, $[a,b]=\lambda_b a$, $[a,h]=\mu a$ for all $b\in B$, where $\alpha_b, \beta_b, \lambda_b, \mu\in F$ and $I=Fa$. Suppose that $[b,b]=0$ for some $b\in B\setminus I$. Then we must have $\beta_b=0$ since, otherwise $[h,b]$ shows that $<b>=Fb$ is not a quasi-ideal. Moreover, $[h+a,b]=-b+\lambda_b a$ shows that $\lambda_b=0$. But now, $[b+a,b+a]=0$ and $[h,b+a]=-b$ shows that $<b+a>=F(b+a)$ is not a quasi-ideal. Hence 
\begin{align}
[b,b]\neq 0 \hbox{ for all } b\in B\setminus I. 
\end{align}
So let $[b,b]=\gamma a$ where $b\in B\setminus I$ and $0\neq \gamma\in F$. If $\lambda_b\neq 0$ then
\[ \left[b-\frac{\gamma}{\lambda_b}a,b-\frac{\gamma}{\lambda_b }a\right]=0 \hbox{ and } b-\frac{\gamma}{\lambda_b }a\in B\setminus I.
\]
It follows that $[I,B]=0$.
\par

We now employ a similar argument for $Fh+I$. Suppose that $[h+\lambda a,h+ \lambda a]=0$. Then $[h+\lambda a,b]=-b+\beta_b a$ which shows that $\beta_b=0$ since, otherwise, $<h+\lambda a>=F(h+\lambda a)$ is not a quasi-ideal. But now $[h+\lambda a,b+a]=-b$ again shows that $<h+\lambda a>$ is not a quasi-ideal. It follows that $[x,x]\neq 0$ for all $x\in (Fh+I)\setminus I$. If $[h,h]=\delta a$ and $\mu\neq 0$ then
\[ \left[h-\frac{\delta}{\mu} a,h-\frac{\delta}{\mu} a\right]=0 \hbox{ and } h-\frac{\delta}{\mu} a\in (Fh+I)\setminus I.
\]
It follows that $[I,Fh+I]=0$, whence $[I,L]=[L,I]=0$ and $I\subseteq Z(L)$, establishing (i).
\par

Now, if $b\in B\setminus Z(L)$ and $Z(L)=Fz$ then
\[ [h,b]=[h,[b,h]]=[[h,b],h]-[[h,h],b]=-[b,h],
\]
so $\beta_b=-\alpha_b$ and $[h,b]=-b-\alpha_b z$. But then, for all $b_1,b_2\in B\setminus Z(L)$,
\[ 0=[h,[b_1,b_2]]=[[h,b_1],b_2]-[[h,b_2],b_1]=-[b_1,b_2]+[b_2,b_1]
\] so $[b_1,b_2]=[b_2,b_1]$. Moreover,
\[ [b_1,b_2]=[b_1,[b_2,h]]=[[b_1,b_2],h]-[[b_1,h],b_2]=-[b_1,b_2].
\]
If $F$ has characteristic different from $2$ this implies that $[b_1,b_2]=0$. But then $(8)$ yields that $B=Z(L)$ and $L/I$ is not almost abelian.
\par

If $F$ has characteristic $2$, put $[h,h]=z$, let $C'$ be any subspace of $B$ complementary to $Fz$ and let $C=R_h(C')$. Then the products in $L$ are as given in the result. If $\lambda_c = \alpha^2$ for some $\alpha \in F$, then $[c+\alpha h,c+\alpha h]=0$ and $[h,c+\alpha h]$ shows that $<c+\alpha h>=F(c+\alpha h)$ is not a quasi-ideal.
\par

So, suppose that $L$ is as given in the theorem. It is straightforward to check that all of these are Leibniz algebras.  Let $U$ be a subalgebra of $L$. If $U\subseteq Z(L)$ it is an ideal of $L$, so suppose that $U\not \subseteq Z(L)$. Let $u=c+\alpha h+\beta z \in U\setminus Z(L)$, where $c\in C$, $\alpha, \beta \in F$. Then $$0\neq (\lambda_c+\alpha^2)z=[u,u] \in U\cap Z(L),$$ so $Z(L)\subseteq U$. Let $V$ be a subspace of $L$. Then
\[ \left[\frac{U}{Z(L)},\frac{V+Z(L)}{Z(L)}\right]+\left[\frac{V+Z(L)}{Z(L)},\frac{U}{Z(L)}\right]\subseteq \frac{U+V}{Z(L)},
\] so $[U,V]+[V,U]\subseteq U+V$ and $U$ is a quasi-ideal of $L$. 
\end{proof}
\medskip

We'll finish with a simple example of the algebras described in Theorem \ref{t:alab} above.

\begin{ex} Let $F=\Z_2(t)$ and let $L$ have basis $c,z,h$ with $[c,c]=tz$, $[h,h]=z$, $[c,h]=[h,c]=c$ and all other products zero. Then $L$ is a (symmetric) Leibniz algebra. Its only one-dimensional subalgebra is $Fz$, which is an ideal. All other proper non-trivial subalgebras have codimension one in $L$ and so are quasi-ideals of $L$.
\end{ex}

\end{document}